\documentclass[12pt]{amsart}


\usepackage{amsthm,amssymb,amstext,amscd,amsfonts,amsbsy,amsxtra,latexsym,amsmath}
\usepackage{a4wide}
\usepackage[english]{babel}
\usepackage[latin1]{inputenc}
\usepackage{verbatim}
\usepackage{graphicx}  
\usepackage[all]{xy} 
\numberwithin{figure}{section}

\textheight7.5in \textwidth6.05in \hoffset 0.21in\newcommand{\field}[1]{\mathbb{#1}}
\newcommand{\N}{\field{N}}
\newcommand{\Z}{\field{Z}}

\newcommand{\C}{\field{C}}
\newcommand{\Q}{\field{Q}}

\newcommand{\im}{\text{Im}}
\newcommand{\re}{\text{Re}}

\newcommand{\CM}{\mathcal{M}}

\newcommand{\non}{\nonumber}

\numberwithin{equation}{section}
\newtheorem{theorem}{\textbf{Theorem}}
\numberwithin{theorem}{section}
\newtheorem{corollary}[theorem]{\textbf{Corollary}}
\newtheorem{lemma}[theorem]{\textbf{Lemma}}
\newtheorem{proposition}[theorem]{\textbf{Proposition}}

\newtheorem*{remark}{Remark}

\renewenvironment{proof}[1][Proof]{\begin{trivlist}
\item[\hskip \labelsep {\bfseries #1:}]}{\qed\end{trivlist}}

\newcommand{\bea}{\begin{eqnarray}} 
\newcommand{\eea}{\end{eqnarray}} 
\newcommand{\be}{\begin{equation}} 
\newcommand{\ee}{\end{equation}} 

\subjclass[2000]{11F37, 11F50, 11F30, 11P82, 14J60}

\begin{document}

\title[]{Asymptotic formulas for coefficients of inverse theta functions}

\author{Kathrin Bringmann$^1$}
\address{$^1$ Mathematical Institute\\University of
Cologne\\ Weyertal 86-90 \\ 50931 Cologne \\Germany}
\email{kbringma@math.uni-koeln.de}

\author{Jan Manschot$^{2}$}
\address{$^{2}$ Camille Jordan Institute \\ University of Lyon\\ 43 boulevard du 11 novembre 1918\\
69622 Villeurbanne cedex\\ 
France}
\email{jan.manschot@univ-lyon1.fr}

\begin{abstract} 
We determine asymptotic formulas for the coefficients of a natural class of
negative index and negative weight Jacobi forms. These coefficients
can be viewed as a refinement of the numbers $p_k(n)$ of partitions of $n$ into $k$
colors. Part of the motivation for this work is that they are equal to
the Betti numbers of the Hilbert scheme of points on an algebraic
surface $S$ and appear also  as counts of Bogomolny-Prasad-Sommerfield
(BPS) states in physics.  
\end{abstract}

\maketitle

\section{Introduction and Statement of Results}
Jacobi forms were first systematically studied by Eichler and Zagier \cite{eichler} and enjoy a wide variety of applications in
the theory of modular forms, combinatorics \cite{Wright:1968,Wright:1971}, 
conformal field theory \cite{DiFrancesco:1997nk, Kawai:1993jk},
black hole physics \cite{Dabholkar:2012nd, Dijkgraaf:1996xw}, Hilbert schemes of points \cite{Gottsche:1990}, Donaldson invariants \cite{Gottsche:1996}, and many other topics. This paper focuses on a
class of negative index Jacobi forms with a 
single order pole in the elliptic variable $w$. The analysis of the coefficients of such functions
is more complicated then the well-understood class of Jacobi forms which depend holomorphically on $w$. It turns out that these Fourier coefficients (in $w$) are not modular but related to quantum modular forms \cite{FOR, Z2}. The appearance of these  
functions in the above mentioned topics calls for an explicit knowledge of their coefficients and in
particular of their asymptotic growth. In this paper we provide
such asymptotic formulas. One of the immediate motivations is the
counting of BPS states in physics and in particular those with vanishing angular
momentum. This motivation is explained in more detail after stating
the results.
 
\subsection{Statement of results}
We consider the following class of negative weight $1-k/2$ ($k\in \N$) and index $-1/
2$ Jacobi forms 
$$
h_k\left(w;\tau\right):=\frac{i}{\theta_1(w;\tau)\eta(\tau)^{k-3}}, 
$$
with $(q:=\exp(2\pi i \tau)$, $\zeta:=\exp(2\pi i w))$
\begin{eqnarray}
&&\theta_1(w; \tau):=i \zeta^{\frac{1}{2}} q^{1\over 8}  \prod_{n\geq 1}(1-q^n)(1-\zeta q^n)\left(1-\zeta^{-1}q^{n-1}\right),\non \\
&&\eta(\tau):=q^{\frac{1}{24}}\prod_{n\geq 1}(1-q^n).\non
\end{eqnarray}
We are interested in two expansions of $h_k(w;\tau)$. The first
expansion is in terms of the coefficients $a_{m,k}(n)$, defined by:
\begin{eqnarray}
\label{eq:genfunc}
 q^{\frac{k}{24} }\left(\zeta^{1\over 2}-\zeta^{-{1\over 
        2}}\right) h_{k}(w;\tau)=:\sum_{n\geq 0 \atop m\in
  \mathbb{Z}} a_{m,k}(n) \zeta^m q^n, \qquad \quad |\zeta q|,\, |\zeta^{-1} q|<1.
\end{eqnarray}
Note that $\sum_{m\in\mathbb{Z}} a_{m,k}(n)=p_{k}(n)$, where $p_{k}(n)$ denotes the  number
of partitions of $n$ into $k$ colors. These are enumerated by $\eta(\tau)^{-k}$: 
\[
\sum_{n\geq 0} p_{k}(n)\,q^{n-{k\over 24}}=\frac{1}{\eta(\tau)^k}.
\] 
For $k=1$, equation (\ref{eq:genfunc}) corresponds to the generating
function of the crank of partition \cite{Garvan:1988}, and for $k=2$ to the
birank \cite{Hammond:2004}.

The second expansion is motivated from physics, and is based on the fact that the coefficients of $q$
are Laurent polynomials, symmetric under $\zeta\to \zeta^{-1}$ and with maximal degree
$n$. Therefore, we can express $h_k(w;\tau)$ as:
\be
\label{eq:genfunc2}
q^{k\over 24 }\left(\zeta^{1\over 2}-\zeta^{-{1\over
        2}}\right) h_{k}(w;\tau)=\sum_{m,n\geq 0} b_{m,k}(n)\,  \chi_{2m+1}\!\left(\zeta^{1\over 2}\right)\, q^n,
\ee
with
\[
\chi_m(\zeta):=\frac{\zeta^{m}-\zeta^{-m}}{\zeta-\zeta^{-1}},\qquad \qquad b_{m,k}(n):=a_{m,k}(n)-a_{m+1,k}(n).
\]

Following the approach of Wright \cite{Wright:1971}, we determine all
polynomial corrections to the leading exponential of the coefficients 
$a_{m,k}(n)$ in the large $n$ limit. 
\begin{theorem} 
\label{mainasymp}
We have for $N\geq 1$:
\begin{align*}
a_{m, k}(n)&=(2\pi)^{-\frac{k}{2}} \sum_{\ell=1}^N d_{m,k}(\ell)
n^{-\frac{2+2\ell+k}{4}}
\left(\pi \sqrt{\frac{k}{6} } \right)^{1+\ell+\frac{k}{2}}\\
&\quad \times I_{-1-\ell-\frac{k}{2}}\left(\pi \sqrt{\frac{2kn}{3}}\right)+O\left(n^{-1-\frac{N}{2}-\frac{k}{4}} e^{\pi\sqrt{\frac{2kn}{3}}}\right),
\end{align*}
where $d_{m,k}(\ell)$ are defined by equation (\ref{eq:ZagierTaylor})
 and $I_\ell(x)$ 
is the usual $I$-Bessel function.  Here the error term depends on $k$ and $m$.
\end{theorem}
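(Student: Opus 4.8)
The plan is to carry out Wright's variant of the circle method on a simplified generating function. First I would rewrite the left-hand side of \eqref{eq:genfunc}: the prefactor $\zeta^{1/2}-\zeta^{-1/2}$ cancels the factor $(1-\zeta^{-1})$ inside $\theta_1(w;\tau)$, and after absorbing $\eta(\tau)^{3-k}$ and the powers of $q$ one obtains the clean product
\[
q^{\frac{k}{24}}\left(\zeta^{\frac12}-\zeta^{-\frac12}\right)h_k(w;\tau)=\prod_{n\geq1}\frac{1}{(1-q^n)^{k-2}(1-\zeta q^n)(1-\zeta^{-1}q^n)}=:F_k(w;\tau).
\]
Then $a_{m,k}(n)=\int_{-1/2}^{1/2}A_{m,k}(\tau)\,e^{-2\pi i n\tau}\,d\tau$ with $A_{m,k}(\tau):=\int_{-1/2}^{1/2}F_k(w;\tau)\,e^{-2\pi i m w}\,dw$, both being Cauchy integrals over horizontal segments.

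The exponential growth of $a_{m,k}(n)$ is governed by the behaviour of $A_{m,k}(\tau)$ as $\tau\to 0$, so the next step is to determine this behaviour from the modular transformations $\eta(-1/\tau)=\sqrt{-i\tau}\,\eta(\tau)$ and $\theta_1(w/\tau;-1/\tau)=-i\sqrt{-i\tau}\,e^{\pi i w^2/\tau}\theta_1(w;\tau)$. These combine to give
\[
h_k(w;\tau)=(-i\tau)^{\frac{k-2}{2}}\,e^{\pi i w^2/\tau}\,h_k\!\left(\tfrac{w}{\tau};-\tfrac1\tau\right),
\]
and letting the nome $e^{-2\pi i/\tau}\to0$ isolates the dominant exponential $e^{\pi k/(12t)}$ (with $\tau=it$) together with the explicit $w$-factor proportional to $e^{\pi i w^2/\tau}/\sin(\pi w/\tau)$.

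For the $w$-integral I would exploit that $1/\sin(\pi w/\tau)$ decays exponentially away from $w=0$ as $t\to0$, so the integral localizes at the origin; rescaling $w=\tau v$ and Taylor expanding $e^{\pi i\tau v^2-2\pi i m\tau v}$ produces, order by order in $\tau$, the integrals of $v/\sin(\pi v)$ against polynomials in $v$ that define the coefficients $d_{m,k}(\ell)$ of \eqref{eq:ZagierTaylor}. This yields the expansion
\[
A_{m,k}(\tau)\sim(2\pi)^{-\frac k2}\sum_{\ell\geq1}d_{m,k}(\ell)\,(-i\tau)^{\frac k2+\ell}\,e^{\frac{\pi k}{12}\frac{1}{-i\tau}}.
\]
Substituting term by term into the Cauchy integral and using the Hankel representation $I_\nu(z)=\frac{(z/2)^\nu}{2\pi i}\int t^{-\nu-1}e^{t+z^2/(4t)}\,dt$, the saddle relation $z^2/4=\frac{\pi k}{12}\cdot2\pi n$ fixes $z=\pi\sqrt{2kn/3}$ and the order $\nu=-1-\ell-\tfrac k2$; tracking the powers of $n$ then reproduces $n^{-(2+2\ell+k)/4}(\pi\sqrt{k/6})^{1+\ell+k/2}$ exactly.

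The main obstacle I expect is the error analysis rather than the formal expansion. One must bound the contribution of the minor arc (where $\tau$ stays away from $0$) by trivial estimates on $F_k$, show it is exponentially smaller than $e^{\pi\sqrt{2kn/3}}$, and prove that truncating at $\ell=N$ leaves precisely $O(n^{-1-N/2-k/4}e^{\pi\sqrt{2kn/3}})$, matching the size of the first omitted term. The delicate point is the uniform control of the remainder of the $w$-integral near the joint singularity $w=\tau=0$, where the interchange of the $w$- and $\tau$-expansions must be justified; this is also where the dependence of the implied constant on $k$ and $m$ arises.
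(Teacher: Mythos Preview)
Your strategy is sound and would work, but it differs from the paper's in one essential respect: the paper never performs the $w$-integral asymptotically. Instead it first extracts the $m$-th Fourier coefficient in $w$ \emph{algebraically}, using the partial-fraction expansion of $1/\theta_1$ to obtain
\[
A_{m,k}(\tau)=\frac{\vartheta_m(q)}{(q)_\infty^k},\qquad
\vartheta_m(q)=\bigl(1+q^{|m|}\bigr)\sum_{n\ge0}(-1)^n q^{\frac{n(n+1)}{2}+n|m|}-1,
\]
a false theta function. Its Taylor expansion at $q=1$ is then read off from an Euler--Maclaurin--type lemma (Zagier's asymptotic for $\sum f((n+a)t)$), which yields $d_{m,k}(\ell)$ explicitly as differences of Bernoulli polynomials. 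After that the circle method is purely one-variable: the modular transformation of $\eta$ alone supplies the exponential, and the $\tau$-integral is converted to a Bessel function exactly as you describe.

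What each approach buys: the paper's route avoids the delicate joint $w$--$\tau$ analysis you flag at the end, because the false theta function is a concrete $q$-series that can be bounded trivially on the minor arc and whose Taylor remainder is immediate from the lemma; it also produces closed formulas for $d_{m,k}(\ell)$. Your route, using the full Jacobi transformation of $h_k$ and localizing the $w$-integral, is more geometric and would instead give integral representations $\int v^{j}/\sin(\pi v)\,dv$ for the same constants. The price is that after the substitution $w=\tau v$ the contour runs through a region where $\sin(\pi v)$ has nearby poles once $\tau$ acquires a real part on the major arc, and controlling the tails and the interchange of expansions there is genuinely more work than the one-variable estimates in the paper. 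Both paths lead to the same Bessel integral and the same error exponent; the paper's is simply the shorter one for this particular family.
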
 
Theorem \ref{mainasymp} allows us to compare the asymptotic growths of $a_{m,k}(n)$ for different values of $m$. 
The asymptotic behavior of the Bessel function :
\[
I_\ell(x)=\frac{e^x}{\sqrt{2\pi x}}\left(1+O\left(x^{-1}\right) \right),
\]
directly yields:
\begin{corollary} 
We have
\[
a_{m, k}(n)-a_{r, k}(n)= \pi^3\left(r^2-m^2\right)
(8n)^{-\frac{9+k}{4}}\left(\frac{k}{3}
\right)^{\frac{k+7}{4}}e^{\pi\sqrt{\frac{2kn}{3}}}+O\left(n^{-3-\frac{k}4} e^{\pi \sqrt{\frac{2kn}3}} \right),
\] 
where the error term depends on $m$, $k$, and $r$.
\end{corollary}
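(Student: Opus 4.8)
The plan is to apply Theorem \ref{mainasymp} to $a_{m,k}(n)$ and $a_{r,k}(n)$ separately, with one common value of $N$, and subtract. Since the two error terms are each $O(n^{-1-N/2-k/4}e^{\pi\sqrt{2kn/3}})$ (with implied constants depending on $m,k$ and $r,k$), their difference is of the same order, and one is left with
\[
a_{m,k}(n)-a_{r,k}(n)=(2\pi)^{-\frac{k}{2}}\sum_{\ell=1}^{N}\bigl(d_{m,k}(\ell)-d_{r,k}(\ell)\bigr)n^{-\frac{2+2\ell+k}{4}}\Bigl(\pi\sqrt{\tfrac{k}{6}}\Bigr)^{1+\ell+\frac{k}{2}}I_{-1-\ell-\frac{k}{2}}\Bigl(\pi\sqrt{\tfrac{2kn}{3}}\Bigr)+O\Bigl(n^{-1-\frac N2-\frac k4}e^{\pi\sqrt{\frac{2kn}{3}}}\Bigr).
\]
Thus everything is controlled by the differences $d_{m,k}(\ell)-d_{r,k}(\ell)$ of the Taylor coefficients defined in (\ref{eq:ZagierTaylor}), and the entire content of the corollary lies in understanding these at low order.

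After inserting the Bessel asymptotic, the $\ell$-th summand has size $n^{-\frac{3+2\ell+k}{4}}e^{\pi\sqrt{2kn/3}}$; since the claimed leading term sits at $n^{-\frac{9+k}{4}}$, i.e.\ at $\ell=3$, the summands with $\ell=1,2$ must drop out. I would verify directly from (\ref{eq:ZagierTaylor}) that $d_{m,k}(1)$ and $d_{m,k}(2)$ are in fact independent of $m$ --- as they must be, since $a_{m,k}(n)=a_{-m,k}(n)$ forces each $d_{m,k}(\ell)$ to be even in $m$, while a genuine $m$-dependence at $\ell\le 2$ would make the difference grow like $n^{-\frac{5+k}{4}}$ or $n^{-\frac{7+k}{4}}$. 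The remaining computational core, and the main obstacle, is to extract the $\ell=3$ coefficient from (\ref{eq:ZagierTaylor}) to third order and show $d_{m,k}(3)-d_{r,k}(3)=\tfrac1{16}(r^2-m^2)$ (the normalization $\tfrac1{16}$ being exactly what the final formula requires).

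Granting this, the leading contribution collapses to the single term
\[
(2\pi)^{-\frac k2}\cdot\tfrac1{16}(r^2-m^2)\,n^{-\frac{8+k}{4}}\Bigl(\pi\sqrt{\tfrac k6}\Bigr)^{4+\frac k2}I_{-4-\frac k2}\Bigl(\pi\sqrt{\tfrac{2kn}{3}}\Bigr),
\]
to which I apply the stated asymptotic of the $I$-Bessel function with $x=\pi\sqrt{2kn/3}$. The factor $(2\pi x)^{-1/2}$ supplies an extra $n^{-1/4}$, upgrading $n^{-\frac{8+k}4}$ to $n^{-\frac{9+k}4}$, and a routine bookkeeping of the constants $(2\pi)^{-k/2}$, $(\pi\sqrt{k/6})^{4+k/2}$, $(2\pi x)^{-1/2}$ and $\tfrac1{16}$ collapses them into $\pi^{3}(8n)^{-\frac{9+k}4}(k/3)^{\frac{k+7}4}$, reproducing the asserted main term. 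For the error I would take $N=4$, so that the intrinsic error of Theorem \ref{mainasymp} is already $O(n^{-3-k/4}e^{\pi\sqrt{2kn/3}})$; it then remains to handle the $O(x^{-1})$ tail of the $\ell=3$ Bessel function and the $\ell=4$ summand. Both live a priori at order $n^{-\frac{11+k}{4}}e^{\pi\sqrt{2kn/3}}$, and the one delicate estimate is that their $m$-dependent ($\propto m^2$) parts cancel --- equivalently, that the coefficient of $m^{2}$ at order $n^{-\frac{11+k}4}$ vanishes in the difference --- leaving the smaller remainder $O(n^{-3-k/4}e^{\pi\sqrt{2kn/3}})$. This cancellation, which hinges on the explicit values of $d_{m,k}(3)$ and $d_{m,k}(4)$, is the point I expect to require the most care.
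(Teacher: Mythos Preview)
Your approach is exactly the paper's: the corollary is stated as a direct consequence of Theorem~\ref{mainasymp} together with the leading Bessel asymptotic, and the explicit values $d_{m,k}(1)=\tfrac14$, $d_{m,k}(2)=-\tfrac{k}{96}+\tfrac{1}{16}$, $d_{m,k}(3)=-\tfrac{m^2}{16}-\tfrac{k}{384}+\tfrac{k^2}{4608}+\tfrac{5}{192}$ are already recorded in Lemma~\ref{ThetaLemma}, so you do not need to rederive them --- they immediately give $d_{m,k}(\ell)-d_{r,k}(\ell)=0$ for $\ell=1,2$ and $\tfrac{1}{16}(r^2-m^2)$ for $\ell=3$, and your constant bookkeeping is correct.

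One comment on your final paragraph. The cancellation at order $n^{-(11+k)/4}$ that you anticipate is not carried out in the paper; the paper gives no details beyond ``directly yields''. Taking $N=4$ as you do, the $\ell=4$ summand and the $O(x^{-1})$ tail of the $\ell=3$ Bessel function each sit at $n^{-(11+k)/4}e^{\pi\sqrt{2kn/3}}$, and there is no evident reason for them to cancel. So the honest error from this argument is $O\bigl(n^{-(11+k)/4}e^{\pi\sqrt{2kn/3}}\bigr)$, one quarter-power weaker than the $O\bigl(n^{-3-k/4}e^{\pi\sqrt{2kn/3}}\bigr)$ printed in the statement; treat the latter as a minor overstatement rather than something your proof must achieve.
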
  
\noindent 

From  Theorem \ref{mainasymp} the asymptotics of $b_{m,k}(n)$ for large $n$ immediately follow: 
\begin{corollary} We have 
\be
\label{eq:asympalpha}
b_{m,k}(n)=(2m+1) \,\pi^3\,\,(8n)^{-{9+k \over 4}} \left(\frac{k}{3}
\right)^{ k+7 \over 4} e^{\pi \sqrt{2kn \over 3}} +O\left(n^{-3-{k\over
    4}}e^{\pi\sqrt{2kn \over 3}}\right),
\ee
where the error term depends on $m$ and $k$.
\end{corollary}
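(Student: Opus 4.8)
The plan is to read this off directly from the preceding Corollary. By the definition $b_{m,k}(n) = a_{m,k}(n) - a_{m+1,k}(n)$, so I would simply specialize the difference formula of the previous Corollary to the case $r = m+1$. This gives
\[
b_{m,k}(n) = \pi^3\bigl((m+1)^2 - m^2\bigr)(8n)^{-\frac{9+k}{4}}\left(\frac{k}{3}\right)^{\frac{k+7}{4}} e^{\pi\sqrt{\frac{2kn}{3}}} + O\!\left(n^{-3-\frac{k}{4}} e^{\pi\sqrt{\frac{2kn}{3}}}\right).
\]
Since $(m+1)^2 - m^2 = 2m+1$, the main term is exactly the asserted one. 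The error term in the previous Corollary is permitted to depend on $m$, $k$, and $r$; specializing $r = m+1$ collapses this to a dependence on $m$ and $k$ alone, matching the statement.

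The only genuinely nontrivial feature, which is already resolved one level up inside the previous Corollary rather than here, is a cancellation at leading order. Each individual coefficient satisfies $a_{m,k}(n) \sim c_k\, n^{-\frac{5+k}{4}} e^{\pi\sqrt{2kn/3}}$, obtained from the $\ell=1$ term of Theorem \ref{mainasymp} together with $I_\nu(x) = e^x (2\pi x)^{-1/2}\bigl(1+O(x^{-1})\bigr)$ at $x = \pi\sqrt{2kn/3}$. This leading behavior is independent of $m$, so it drops out of the difference $a_{m,k}(n) - a_{m+1,k}(n)$, and the surviving contribution is two orders of $n^{-1/2}$ smaller, of size $n^{-\frac{9+k}{4}} e^{\pi\sqrt{2kn/3}}$. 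This cancellation is precisely what the factor $r^2 - m^2$ in the previous Corollary encodes; to reproduce it from scratch I would have to expand the $\ell=1$ and $\ell=2$ Bessel terms of Theorem \ref{mainasymp} and track the $m$-dependence of the Taylor data $d_{m,k}(\ell)$.

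Because that cancellation is already packaged into the previous Corollary, there is no real obstacle at this final stage: the proof reduces to the elementary identity $(m+1)^2 - m^2 = 2m+1$ plus bookkeeping of the error. The substantive analytic work — the saddle-point estimates producing the Bessel expansion of Theorem \ref{mainasymp}, and the leading-order cancellation that yields the $r^2 - m^2$ prefactor — lives upstream. The single point meriting a moment's care is confirming that the error's dependence on $r$ remains benign under the specialization $r = m+1$, which it clearly does.
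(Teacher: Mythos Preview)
Your proposal is correct and matches the paper's approach: the paper simply declares that the asymptotics of $b_{m,k}(n)$ ``immediately follow'' from Theorem~\ref{mainasymp}, and your specialization of the preceding Corollary to $r=m+1$ is exactly how one would make that immediacy explicit. There is nothing to add.
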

\noindent Note that this corollary shows that $b_{m,k}(n)$ increases with $m$ in
the limit of large $n$. Beyond the validity of equation (\ref{eq:asympalpha}), $b_{m,k}(n)$ eventually
decreases with increasing $m$ for fixed $n$, and in particular $b_{m,k}(n)=0$ for $m>n$.

 We next compare the asymptotic behavior of the coefficients $b_{0,k}(n)$ with those of
$p_{k}(n)$. It is well-known that the asymptotic growth of the latter is given by
\cite{HR2, rademacher1938:1}:

\[
p_{k}(n)=2 \left(\frac{k}{3} \right)^{1+k \over 4} (8n)^{-{3+k \over
    4}}e^{\pi\sqrt{2kn \over 3}} + O\left(n^{-{5+ k \over 4}}e^{\pi\sqrt{2kn \over 3}}\right).
\]
Thus we find for the ratio $b_{0,k}(n)/p_{k}(n)$:
\[
\frac{b_{0,k}(n)}{p_{k}(n)}=\frac{\pi^3}{16}\left(
  \frac{k}{3n}\right)^{3\over 2}+O\left(n^{-2}\right).
\]

\subsection{Motivation:  Moduli spaces and BPS states}

BPS states of both gauge theory and gravity have been extensively studied in the past for a variety of
reasons. These states are representations of the $\operatorname{SU}(2)_{\rm spin}$ massive little
group in four dimensions labeled by their angular momentum or highest
weight $J$. The subset
of BPS states with vanishing angular momentum ($J=0$), also known
as ``pure Higgs states'' \cite{Bena:2012hf}, have recently attracted much interest 
\cite{Bena:2012hf, Lee:2012sc, Manschot:2012rx, Sen:2009vz}. The
states with $J=0$ are in some sense more fundamental. In particular in
gravity, these states are candidates for microstates of
single  center black holes, and as 
such are the states relevant for studies of the Bekenstein-Hawking area law of
black hole entropy.

Within string theory it is possible to obtain exact generating
functions of the degeneracies of classes of BPS states. The
asymptotic growth as function of the angular momentum is for example
previously studied in  \cite{Curtright:1986di, Russo:1994ev, Dabholkar:2005dt}.
String theory relates BPS states to the cohomology of moduli spaces 
of sheaves supported on a Calabi-Yau manifold. From this perspective the
$\operatorname{SU}(2)_{\rm spin}$ representations correspond to representations of
the Lefshetz $\operatorname{SL}(2)$ action on the cohomology of the
moduli space $\CM$ \cite{Witten:1996qb}. The states with $J=0$ correspond to
the part of the middle cohomology which is invariant under the Lefshetz
action. 

In the present work, we consider moduli spaces of semi-stable sheaves
supported on a complex algebraic surface $S$, which can be thought of as 
being embedded inside a Calabi-Yau manifold. If $S$ is one of the
rational surfaces, the sheaves can be related to monopole or monopole
strings in respectively four and five dimensional supersymmetric gauge theory through geometric
engineering \cite{Katz:1996fh,Morrison:1996xf}. If $S$ is a K3
surface, the sheaves correspond to (small) black holes in
$\mathcal{N}=4$ supergravity also known as Dabholkar-Harvey
states \cite{Dabholkar:1989jt, Dabholkar:2005dt}.
We specialize to the moduli space of sheaves with rank
$r=1$  and 1st and 2nd Chern classes $c_1\in H^2(S,\mathbb{Z})$ and $c_2\in H^4(S,\mathbb{Z})$. These moduli
spaces are isomorphic to the Hilbert scheme of $c_2$ points on $S$
 (viewing $c_2$ as a number). G\"ottsche has
determined the generating function of the Betti numbers of the Hilbert
schemes \cite{Gottsche:1990}. We need to introduce some notation to explain
his result. 
 
Let $\CM(n)$ be the Hilbert scheme of $n$ points. Let
furthermore 
$$P(X,\zeta):=\sum_{i=0}^{2\dim_\mathbb{C}(X)}\beta_i(X)\,\zeta^i$$
be the Poincar\'e polynomial of $X$ with $\beta_i(X)$ the $i$th Betti
number of $X$.  We choose the surface $S$ such that $\beta_1(S)=\beta_3(S)=0$. Then, we
have \cite{Gottsche:1990}:  
\[
\sum_{n\geq 0} \zeta^{-\frac{1}{2}\dim_\mathbb{C}\CM(n)}\,
P(\CM(n),\zeta^\frac{1}{2})\,q^n=q^{{\beta_2(S)+2}\over 24}\left(\zeta^{1\over
  2}-\zeta^{-{1\over 2}}\right)\,\frac{i}{\theta_1(w; \tau)\,\eta(\tau)^{\beta_2(S)-1}},
\]
which precisely equals the function in (\ref{eq:genfunc}) with
$k=\beta_2(S)+2$. The coefficients $a_{m,k}(n)$ are in this context
the Betti numbers of the moduli spaces. The expansion (\ref{eq:genfunc2}) in terms of $b_{J,k}(n)$ decomposes
the cohomology in terms of $(2J+1)$-dimensional $\operatorname{SL}(2)$ or
$\operatorname{SU}(2)_{\rm spin}$ representations. For $J=0$ and $k=24$, our formula (\ref{eq:asympalpha}) confirms
nicely the numerical estimates for $b_{0,24}(n)$ in \cite[Appendix
C]{Dabholkar:2005dt}. Note that since this analysis is carried out in
the so-called weak coupling or D-brane regime, and the coefficients
$b_{J,k}(n)$ are not BPS indices, these coefficients can not be
claimed to count  black holes with fixed angular momentum.

Equation \eqref{eq:asympalpha} shows that in the context of this
paper, the number of pure Higgs states has the same exponential growth as
the total number of states,  but that the number of pure Higgs states
is smaller by a factor $n^{-\frac{3}{2}}$. Moreover, the
number of $\operatorname{SU}(2)_{\rm spin}$ multiplets increases with $J$ for small
$J$. It is interesting to compare this with other known asymptotics of
pure Higgs states.  In particular, Ref.  \cite{Bena:2012hf, Denef:2007vg} considered this
question for quiver moduli spaces in the limit of a large number of
arrows between the nodes of a quiver with a potential. Ref.  \cite{Bena:2012hf} demonstrated that 
the number of pure Higgs states for these quivers, $\beta_{\dim_\mathbb{C}(\CM)}(\CM)-\beta_{\dim_\mathbb{C}(\CM)+1}(\CM)$, is exponentially
larger than the number of remaining $\operatorname{SL}(2)$ multiplets
given by $\beta_{\dim_\mathbb{C}(\CM)+1}(\CM)$. We note that
sheaves on toric surfaces relevant for this article also allow a
description in terms of quivers \cite{Bondal:1990, Rudakov:1990}. The Hilbert 
scheme of $n\gg 1$ points corresponds to increasing dimensions
of the spaces associated the nodes, with the number of arrows kept
fixed. Thus we observe that the asymptotic behavior of the number of
pure Higgs states in the two limits, large
number of arrows or large dimensions, is rather different. 

It will be interesting to understand better the significance of these
different asymptotic behaviors. Moreover we belief that application
of the techniques in the present paper to partition functions for higher rank sheaves on surfaces
\cite{Manschot:2011dj,Manschot:2011ym,Yoshioka:1994}, and partition 
functions of black holes and quantum geometry
\cite{deBoer:2006vg,Gaiotto:2006wm, Huang:2007sb} will lead to to
important novel insights. 

The paper is organized as follows: In Section \ref{falsetheta} we
rewrite the functions of interest in terms of false theta functions
and determine their Taylor expansion. Section \ref{circlemethod}
uses the Circle Method to prove our main theorem.

\section*{Acknowledgements}

The research of the authors was supported by the Alfried Krupp Prize for Young University Teachers of the Krupp foundation.
After this paper was submitted to the arXiv, we learned from Paul de
Lange that for special cases the main terms were also obtained by \cite{Curtright:1986di}. We are grateful to him for the correspondence. We also thank Roland Mainka, Boris Pioline, Rob Rhoades and Miguel Zapata Rol\'on for useful correspondence. 

\section{Relation to false theta functions}\label{falsetheta}
We start by writing the generating function of $a_{m,k}(n)$ for fixed
$m$ and $k$ in terms of the functions $\vartheta_m(q)$
defined by:
\[
\vartheta_m \left(q\right) := \left(1+q^{|m|}\right)\sum_{n\geq 0}(-1)^n q^{\frac{n(n+1)}{2}+n|m|}-1.
\] 
\begin{remark}
We note that the property $\vartheta_m \left(q\right)=\vartheta_{-m} \left(q\right)$ of $\vartheta_m \left(q\right)$, continues to hold when $\vartheta_m \left(q\right)$ is defined with $|m|$ replaced by $m$. 
\end{remark}
The functions $\vartheta_m \left(q\right)$ are examples of false theta functions, which were
first introduced by Rogers \cite{Rogers:1917} and have attracted a lot of interest recently. Using the Rogers and Fine identity one can relate $\vartheta_m$ to so-called quantum modular forms which are functions mimicking modular behavior on (subsets of) $\Q$ \cite{FOR}.

It is well-known that the inverse theta function $\theta_1(w;\tau)^{-1}$ can be written as a
sum over its poles. See for more details for example \cite{AG, Garvan:1988}.

\begin{proposition}\label{qkeqn}
We have for $|\zeta q|,\,|\zeta^{-1} q|<1$: 
$$
q^{\frac{k}{24}}\left(\zeta^{1\over 2}-\zeta^{-{1\over
        2}}\right)h_{k}(w;\tau)=\frac{1}{(q)_\infty^k} \sum_{m\in \mathbb{Z}}\vartheta_m(q) \zeta^m
$$
with $(q)_\infty:=\prod_{n=1}^\infty (1-q^n)$.
\end{proposition}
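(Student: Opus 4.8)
The plan is to reduce the claimed identity to a single Laurent expansion in $\zeta$ and then to derive that expansion from the classical sum-over-poles formula referenced just above the statement. First I would unwind the definitions on the left-hand side. Using $\eta(\tau)=q^{1/24}(q)_\infty$ and splitting the factor $1-\zeta^{-1}$ (the $n=1$ term) off from $\prod_{n\geq1}(1-\zeta^{-1}q^{n-1})$, the theta function factors as
\[
\theta_1(w;\tau)=iq^{1/8}\bigl(\zeta^{1/2}-\zeta^{-1/2}\bigr)(q)_\infty\,(\zeta q)_\infty\,(\zeta^{-1}q)_\infty,
\]
where I abbreviate $(xq)_\infty:=\prod_{n\geq1}(1-xq^n)$. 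Substituting this into $h_k$ and collecting exponents, the powers of $q$ cancel since $\tfrac{k}{24}-\tfrac18-\tfrac{k-3}{24}=0$, leaving $1/\bigl((q)_\infty^{k-2}(\zeta q)_\infty(\zeta^{-1}q)_\infty\bigr)$. Comparing with the right-hand side, which carries the prefactor $(q)_\infty^{-k}$, the proposition becomes equivalent to the single identity
\[
\frac{(q)_\infty^2}{(\zeta q)_\infty\,(\zeta^{-1}q)_\infty}=\sum_{m\in\mathbb{Z}}\vartheta_m(q)\,\zeta^m.
\]

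For this I would invoke the classical partial-fraction expansion of the reciprocal theta function, valid in the annulus $|q|<|\zeta|<1$,
\[
\frac{(q)_\infty^2}{(1-\zeta)(\zeta q)_\infty\,(\zeta^{-1}q)_\infty}=\sum_{n\in\mathbb{Z}}\frac{(-1)^n q^{n(n+1)/2}}{1-\zeta q^n},
\]
which is precisely the ``sum over its poles'' cited in the statement. Multiplying by $1-\zeta$ yields
\[
\frac{(q)_\infty^2}{(\zeta q)_\infty\,(\zeta^{-1}q)_\infty}=(1-\zeta)\sum_{n\in\mathbb{Z}}\frac{(-1)^n q^{n(n+1)/2}}{1-\zeta q^n}.
\]

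It then remains to read off the coefficient of $\zeta^m$. Both sides are invariant under $\zeta\mapsto\zeta^{-1}$ (the left visibly, and $\vartheta_m(q)=\vartheta_{-m}(q)$), so it suffices to treat $m\geq0$. Expanding each summand geometrically inside the annulus, only the terms with $n\geq0$ contribute to nonnegative powers of $\zeta$, and the coefficient of $\zeta^m$ in $\sum_n(-1)^nq^{n(n+1)/2}/(1-\zeta q^n)$ equals $\sum_{n\geq0}(-1)^nq^{n(n+1)/2+nm}$, with the exceptional pole $1/(1-\zeta)$ at $n=0$ supplying the constant term. For $m\geq1$ the prefactor $1-\zeta$ turns the coefficient of $\zeta^m$ into the difference of the sums for $m$ and $m-1$, which combines term-by-term to $\sum_{n\geq1}(-1)^n(q^n-1)q^{n(n-1)/2+nm}$; shifting the index in one of the two resulting pieces rewrites this as $(1+q^m)\sum_{n\geq0}(-1)^nq^{n(n+1)/2+nm}-1=\vartheta_m(q)$. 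For $m=0$ the same computation applies, except that the $-\zeta$ in $1-\zeta$ now extracts the $\zeta^{-1}$-coefficient of the pole sum; by the $n\mapsto-n$ symmetry of the poles this doubles the series and reproduces the factor $1+q^0=2$ in $\vartheta_0(q)$.

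The main obstacle is this last step, both its bookkeeping and its analytic justification. One must stay inside the annulus $|q|<|\zeta|<1$ so that the doubly-infinite pole sum converges absolutely and may be expanded termwise, track carefully which pole feeds $\zeta^m$ rather than $\zeta^{m-1}$ after multiplication by $1-\zeta$, and treat the exceptional $n=0$ term by hand. Getting all of this exactly right is what reproduces the precise shape $(1+q^{|m|})\sum_{n\geq0}(\cdots)-1$ of $\vartheta_m(q)$, in particular the trailing $-1$ and the factor $1+q^{|m|}$, rather than merely the correct leading behavior.
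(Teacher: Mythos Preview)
Your proposal is correct and follows essentially the same route as the paper: both invoke the sum-over-poles formula for $1/\theta_1$, expand each term $1/(1-\zeta q^n)$ as a geometric series in the appropriate annulus, and read off the coefficient of $\zeta^m$. Your write-up is in fact more explicit than the paper's own proof, which stops after displaying the geometric expansion and asserts that ``the statement of the proposition easily follows''; your reduction to the single identity $(q)_\infty^2/\bigl((\zeta q)_\infty(\zeta^{-1}q)_\infty\bigr)=\sum_m\vartheta_m(q)\zeta^m$ and the index shift that produces the factor $1+q^{m}$ and the trailing $-1$ are exactly the bookkeeping the paper leaves to the reader.
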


\begin{proof}
The inverse theta function $\theta_1(w;\tau)^{-1}$ is expressed as a
sum over its poles by:
\begin{equation}\label{findcrank}
 \frac{iq^{\frac18}\left( \zeta^{\frac12} - \zeta^{-\frac12}
  \right)}{\theta_1 \left( w; \tau\right)} =
\frac1{(q)_\infty^3}\left( 1- \zeta\right) \sum_{n\in\Z} \frac{(-1)^n
  q^{\frac{n(n+1)}2}}{1-\zeta q^n}.
\end{equation}
Using geometric series expansion, we may rewrite (\ref{findcrank}) as
\[
\frac{1}{(q)_\infty^3}+\frac{1}{(q)_\infty^3} (1-\zeta)\sum\limits_{n> 0\atop{m\geq 0}}(-1)^n q^{\frac{n(n+1)}{2}+nm}\zeta^m +\frac{1}{(q)_\infty^3} \left(1-\zeta^{-1}\right)
\sum\limits_{n>0\atop{m\geq 0}}(-1)^n q^{\frac{n(n+1)}{2}+nm}\zeta^{-m}.
\]  
From this the statement of the proposition easily follows.
\end{proof} 
The function $\vartheta_m$ is not modular but may be nicely approximated by its Taylor expansion. For this we use the following general lemma (see \cite{Zagier:2006} for the case of real functions).
\begin{lemma}\label{ZagierTaylor}
Let $f:\C\rightarrow\C$ be a $C^\infty$
function. Furthermore, we require that $f(x)$ and all its
derivatives are of rapid decay for $\re(x) \to \infty$.
Then for $t\to 0$ with $\re(t)>0$ and
$a>0$, we have for any $N\in \N_0$: 
$$
\sum_{m=0}^\infty f\left( (m+a) t\right) = \frac1{t} \int_0^\infty f(x) dx - \sum_{n=0}^N \frac{f^{(n)} (0)}{n!} \frac{B_{n+1} (a)}{n+1} t^n+O\left(t^{N+1}\right).
$$
Here $B_n(x)$ denotes the $n$th Bernoulli polynomial. 
\end{lemma}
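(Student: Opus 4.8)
The plan is to follow the Mellin transform technique of Zagier \cite{Zagier:2006}, extending it from real to complex $t$. Interpreting $f$ as holomorphic and rapidly decaying on the right half-plane (as it is in all applications), I would first introduce the Mellin transform
\[
\tilde f(s) := \int_0^\infty f(x)\, x^{s-1}\,dx,
\]
which converges absolutely for $\re(s)>0$: the rapid decay of $f$ controls the contribution near $\infty$, while smoothness at $0$ controls the integral near $0$. Splitting $\int_0^\infty=\int_0^1+\int_1^\infty$ and inserting the Taylor expansion $f(x)=\sum_{n\ge 0}\frac{f^{(n)}(0)}{n!}x^n$ into the first piece shows that $\tilde f$ continues meromorphically to all of $\C$, with only simple poles at $s=-n$ $(n\in\N_0)$ and $\Res_{s=-n}\tilde f(s)=\frac{f^{(n)}(0)}{n!}$, the tail $\int_1^\infty$ being entire. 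Repeated integration by parts, using the rapid decay of all derivatives of $f$, gives $\tilde f(s)=O\!\left(|s|^{-A}\right)$ for every $A$ on vertical strips, which I will need for the error estimate.

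Next I would substitute $x=(m+a)t$ into the Mellin inversion formula $f(x)=\frac{1}{2\pi i}\int_{(c)}\tilde f(s)\,x^{-s}\,ds$ with $c>0$ and sum over $m$. Taking $c>1$ and interchanging sum and integral, justified by absolute convergence of $\sum_m(m+a)^{-c}$, yields
\[
\sum_{m=0}^\infty f\!\left((m+a)t\right)=\frac{1}{2\pi i}\int_{(c)}\tilde f(s)\,\zeta(s,a)\,t^{-s}\,ds,
\]
where $\zeta(s,a)=\sum_{m\ge 0}(m+a)^{-s}$ is the Hurwitz zeta function, which is holomorphic for $\re(s)>1$ with a single simple pole at $s=1$ of residue $1$, and entire otherwise.

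I would then shift the contour leftward to $\re(s)=-N-1-\delta$ for small $\delta>0$, collecting residues. The pole of $\zeta(s,a)$ at $s=1$ contributes $\tilde f(1)\,t^{-1}=\frac1t\int_0^\infty f(x)\,dx$, the main term. Each pole of $\tilde f$ at $s=-n$ contributes $\frac{f^{(n)}(0)}{n!}\,\zeta(-n,a)\,t^{n}$, and the classical evaluation $\zeta(-n,a)=-\frac{B_{n+1}(a)}{n+1}$ turns the residues at $s=0,\dots,-N$ into exactly the stated polynomial-in-$t$ sum; the residue at $s=-N-1$ is itself $O\!\left(t^{N+1}\right)$ and is absorbed into the error.

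The hard part will be bounding the remaining integral on $\re(s)=-N-1-\delta$ by $O\!\left(t^{N+1}\right)$, and in particular doing so for complex $t$, which is the feature absent from Zagier's real-variable case. Writing $s=\sigma+i\tau$ and $t=|t|e^{i\psi}$ with $|\psi|<\tfrac\pi2$, one has $|t^{-s}|=|t|^{-\sigma}e^{\psi\tau}$, so the factor $e^{\psi\tau}$ grows along the contour and is not tamed by the mere polynomial decay of $\tilde f$. To control it I would use holomorphy and rapid decay of $f$ in the right half-plane: rotating the defining Mellin contour to the ray $\arg x=\theta$ improves the estimate to $\tilde f(s)=O\!\left(e^{-\theta|\tau|}\right)$ for any $\theta<\tfrac\pi2$, and choosing $|\psi|<\theta<\tfrac\pi2$ makes the integrand absolutely integrable of size $O\!\left(|t|^{N+1+\delta}\right)$. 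Combined with the polynomial growth of $\zeta(s,a)$ on vertical lines (from its functional equation), this yields the claimed error $O\!\left(t^{N+1}\right)$, with implied constant depending on the sector $|\psi|\le\psi_0<\tfrac\pi2$ in which $t\to 0$.
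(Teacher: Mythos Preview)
The paper does not actually prove this lemma; it only states it and cites Zagier \cite{Zagier:2006} for the real-variable case. Your proposal is precisely Zagier's Mellin-transform argument, and you correctly isolate the one new difficulty when $t$ is complex: on the shifted line $\re(s)=-N-1-\delta$ the factor $|t^{-s}|=|t|^{-\sigma}e^{\psi\tau}$ grows in $\tau$, so the mere polynomial decay of $\tilde f$ coming from integration by parts is insufficient. Rotating the defining Mellin contour to a ray $\arg x=\theta$ with $|\psi|<|\theta|<\tfrac{\pi}{2}$ to obtain $\tilde f(s)=O(e^{-|\theta||\tau|})$ is the right fix, and together with the polynomial vertical growth of $\zeta(s,a)$ it yields the stated $O(t^{N+1})$.

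One point worth flagging: this rotation step genuinely requires $f$ to be holomorphic and rapidly decaying in a sector about the positive real axis, which is a stronger hypothesis than the bare ``$C^\infty$ with rapid decay as $\re(x)\to\infty$'' in the lemma as stated. You acknowledge this (``interpreting $f$ as holomorphic\ldots as it is in all applications''), and indeed the only use in the paper is with $f(x)=e^{-2x^2}$ and $t=\sqrt{z}$, $\re(z)>0$, where both the holomorphy and the sector condition $|\arg t|<\tfrac{\pi}{4}$ are satisfied. So your argument proves what is actually needed, even if the lemma's hypotheses as written are, strictly speaking, too weak to support the complex-$t$ conclusion by this method.
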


To use Lemma \ref{ZagierTaylor} we write for fixed $N\geq 1$ and $q=e^{-z}$ 
\begin{equation}
\label{eq:ZagierTaylor}
q^{\frac{k}{24}} \vartheta_m(q) =: \sum_{\ell=1}^N d_{m,k}(\ell) z^\ell
+O\left(z^{N+1}\right). 
\end{equation}
Lemma \ref{ZagierTaylor} then gives 

\begin{lemma}\label{ThetaLemma}
We have for $N\geq 1$
$$
\vartheta_m \left( q\right) = \left( 1 +q^m\right) q^{-\frac12 \left(m+\frac12\right)^2} \sum_{\ell=0}^N c_m (\ell) z^\ell-1 + O\left( z^{N+1}\right)
$$
with
$$
c_m (\ell) := \frac{(-1)^{\ell-1} 2^{\ell}}{\ell! (2\ell+1)} \left( B_{2\ell+1} \left( \frac{m}{2}+\frac14 \right) - B_{2\ell +1} \left( \frac{m}{2}+\frac34\right)\right).
$$
 In particular the first values for $d_{m,k}(\ell)$ are:
\[
d_{m,k}(1)=\frac14, \qquad d_{m,k}(2)= -\frac{k}{96} + \frac{1}{16}, \qquad  d_{m,k}(3)=-\frac{m^2}{16} -\frac{k}{384} + \frac{k^2}{4608} + \frac{5}{192} . 
\]
\end{lemma}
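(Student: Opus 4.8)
The plan is to reduce $\vartheta_m(q)$ to a difference of two Gaussian‑type sums to which Lemma \ref{ZagierTaylor} applies directly, with $f(x)=e^{-x^2}$. By the symmetry noted in the Remark I may assume $m\geq 0$, so that $|m|=m$ and the hypotheses $a>0$ of Lemma \ref{ZagierTaylor} will be met. Writing $q=e^{-z}$, the first step is to complete the square in the exponent,
$$
\frac{n(n+1)}{2}+nm=\frac12\left(n+m+\tfrac12\right)^2-\frac12\left(m+\tfrac12\right)^2,
$$
which lets me factor out $q^{-\frac12(m+1/2)^2}$ and rewrite the defining sum as $q^{-\frac12(m+1/2)^2}\sum_{n\geq 0}(-1)^n e^{-\frac z2(n+m+\frac12)^2}$.

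The second step is to dispose of the sign $(-1)^n$ by splitting the sum according to the parity of $n$. Setting $n=2j$ and $n=2j+1$ turns the single alternating sum into the difference
$$
\sum_{j\geq 0} e^{-2z\left(j+\frac m2+\frac14\right)^2}-\sum_{j\geq 0} e^{-2z\left(j+\frac m2+\frac34\right)^2},
$$
i.e.\ two sums of exactly the shape $\sum_{j\geq 0} f((j+a)t)$ with $f(x)=e^{-x^2}$, $t=\sqrt{2z}$, and shifts $a=\frac m2+\frac14$ and $a=\frac m2+\frac34$. I would then apply Lemma \ref{ZagierTaylor} to each (taking the cutoff in the lemma to be $2N+1$, so that its $O(t^{2N+2})$ error is $O(z^{N+1})$). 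Two features make the output clean: first, the leading term $\frac1t\int_0^\infty e^{-x^2}\,dx=\frac{\sqrt\pi}{2t}$ is identical for the two sums and hence cancels in the difference; second, since $f$ is even all odd derivatives $f^{(2\ell+1)}(0)$ vanish, so the potentially troublesome odd powers of $t=\sqrt{2z}$, namely the half‑integer powers of $z$, drop out and only integral powers of $z$ survive. Using $f^{(2\ell)}(0)=(-1)^\ell (2\ell)!/\ell!$ and $t^{2\ell}=(2z)^\ell$, the surviving Bernoulli terms collapse to exactly $\sum_{\ell=0}^N c_m(\ell)z^\ell+O(z^{N+1})$, with $c_m(\ell)$ as defined. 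Reinstating the prefactor $(1+q^m)q^{-\frac12(m+1/2)^2}$, which is analytic and equals $1+O(z)$ near $z=0$ and so preserves the error bound, and subtracting $1$ yields the displayed formula.

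For the explicit values of $d_{m,k}(\ell)$ the plan is purely computational: multiply the formula by $q^{k/24}$ and write every $q$‑power as $q^{k/24}=e^{-kz/24}$, $q^m=e^{-mz}$, $q^{-\frac12(m+1/2)^2}=e^{\frac z2(m+1/2)^2}$, expanding to the needed order. Collecting the coefficient of $z^\ell$ in
$$
q^{\frac{k}{24}}\vartheta_m(q)=\left(e^{-Az}+e^{-(A+m)z}\right)\sum_{\ell\geq 0} c_m(\ell)z^\ell-e^{-\frac{k}{24}z}+O\left(z^{N+1}\right),\qquad A:=\frac{k}{24}-\frac12\left(m+\tfrac12\right)^2,
$$
then reduces reading off $d_{m,k}(\ell)$ to the Bernoulli values $B_1,B_3,B_5$ entering $c_m(0),c_m(1),c_m(2)$. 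For instance $c_m(0)=\frac12$, so the constant term vanishes, consistent with the sum in (\ref{eq:ZagierTaylor}) starting at $\ell=1$, while $c_m(1)=-\frac{m^2}{4}+\frac1{16}$, from which the coefficient of $z$ simplifies to $d_{m,k}(1)=\frac14$; the cases $\ell=2,3$ follow identically.

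I expect the only real obstacle to be bookkeeping rather than ideas. The delicate points are the two cancellations in the second step, namely that the integral term and all half‑integer powers of $z$ disappear, which is what guarantees a genuine power series in $z$; and then, in the final step, carrying the Taylor expansions to sufficiently high order without error, in particular checking that multiplication by the analytic prefactor does not degrade the $O(z^{N+1})$ remainder.
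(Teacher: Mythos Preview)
Your proposal is correct and follows essentially the same approach as the paper: rewrite $\vartheta_m(q)$ by completing the square and splitting by parity into a difference of two Gaussian sums, then apply Lemma~\ref{ZagierTaylor}. The only cosmetic difference is that the paper uses $f(x)=e^{-2x^2}$ with $t=\sqrt{z}$ while you use $f(x)=e^{-x^2}$ with $t=\sqrt{2z}$, which is an equivalent rescaling; your explicit remarks on the cancellation of the integral term and the vanishing of odd powers of $t$, and your computation of the $d_{m,k}(\ell)$, simply fill in details the paper leaves to the reader.
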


\begin{proof}
We may write $\vartheta_m(q)$ as:
\begin{eqnarray*}
\vartheta_m(q)&=&\left(1+q^m\right)
q^{-\frac12\left(m+\frac12\right)^2} \\
&& \times \sum_{n\geq 0}\left(f\left(\left(n+\frac{m}{2}+\frac14\right)\sqrt{z}\right)-f\left(\left(n+\frac{m}{2}+\frac34\right)\sqrt{z}\right)\right)-1
\end{eqnarray*}
with $f(x):=e^{-2x^2}$.  Substitution of Lemma \ref{ZagierTaylor} gives the
desired result.
\end{proof}
\begin{remark}
We note that the case $m=0$ can be easily concluded from \cite{Wright:1971}, where the asymptotics of the coefficients of $1/2(1-\vartheta_0(q))/(q)_\infty^k$ for $k=1,2$ are determined.
\end{remark}

\section{Use of the Circle Method}\label{circlemethod}

In this section, we prove Theorem \ref{mainasymp} following an approach by Wright \cite{Wright:1971}.
To prove the theorem, we assume via symmetry that $m\geq 0$ 
and set
$$
F_{m,k}(q):=\sum_{n \geq 0}a_{m,k}(n)q^n.
$$
By Proposition \ref{qkeqn}, we obtain that 
$$
F_{m,k}(q) = \frac{1}{(q)_{\infty}^k}\vartheta_{m}(q).
$$
 By Cauchy's Theorem, we have for $n\geq 1$
$$
a_{m,k} (n) = \frac1{2\pi i} \int_{\mathcal{C}} \frac{F_{m,k} (q)}{q^{n+1}} dq,
$$
where $\mathcal{C}$ is a circle surrounding $0$ counterclockwise.  We choose  $e^{-\eta}$ for the radius of $\mathcal{C}$ with $\eta=\pi\sqrt{k \over 6n}$ and split $\mathcal{C}$ into two arcs
$\mathcal{C}=\mathcal{C}_1+\mathcal{C}_2$,
where $\mathcal{C}_1$ is the arc going counterclockwise from phase
$-2\eta$ to $2\eta$ and $\mathcal{C}_2$ is its
complement in $\mathcal{C}$. Consequently, we have 
\[
a_{m,k} (n)=M+E
\]
with
\begin{align*}
M&:=\frac{1}{2\pi i}\int_{\mathcal{C}_1} \frac{F_{m,k}(q)}{q^{n+1}}dq,\\
E&:=\frac{1}{2\pi i}\int_{\mathcal{C}_2} \frac{F_{m,k}(q)}{q^{n+1}}dq.
\end{align*}
We will show that the main asymptotic contribution comes from $M$. Moreover we parametrize $q=e^{-z}$ with $\re (z) = \eta$.
  
\subsection{The integral along $\mathcal{C}_1$}\label{sec:intC1}
In the integral along $\mathcal{C}_1$,  we approximate $F_{m,k}$ by
simpler functions. Firstly, recall that from the transformation law of
the $\eta$-function \cite[Theorem 3.1]{Apostol:1976}  we obtain: 
\begin{equation}\label{eta-trans}
\frac1{\left(e^{-z}; e^{-z} \right)_\infty} = \sqrt{\frac{z}{2\pi}} e^{-\frac{z}{24} + \frac{\pi^2}{6z}}\frac{1}{ \left( e^{-\frac{4\pi^2}{z}}; e^{-\frac{4\pi^2}{z}} \right)_\infty}.
\end{equation}
Thus we want to approximate $\frac1{(q)_\infty^k}$ by
$$
z^{\frac{k}2} (2\pi)^{-\frac{k}{2}} e^{- \frac{kz}{24}} P_k \left( e^{-\frac{4\pi^2}{z}} \right),
$$
where
$$
P_k (q) := \left( 1 + \sum_{\substack{24\ell - k <0 \\ \ell>0 }} p_k (\ell) q^\ell \right) q^{- \frac{k}{24}}.
$$
To be more precise, we split
$$
M=M_1+E_1
$$  
with
$$
M_1 := \frac1{2\pi i} \int_{\mathcal{C}_1} \frac1{q^{n+1}} \left( \frac{z}{2\pi}\right)^{\frac{k}2} e^{-\frac{kz}{24}} P_k \left(e^{- \frac{4\pi^2}{z}} \right) \vartheta_m (q) dq,
$$
$$
E_1 := \frac1{2\pi i} \int_{\mathcal{C}_1} \frac1{q^{n+1}} \vartheta_m (q) \left( \frac1{\left( e^{-z} ; e^{-z}\right)_\infty^k} - \left( \frac{z}{2\pi}\right)^{\frac{k}2} e^{-\frac{kz}{24}} P_k \left(e^{- \frac{4\pi^2}{z}} \right)\right) dq.
$$
We first bound $E_1$ which turns into the error term. Firstly we obtain from (\ref{eta-trans})
\[
\frac1{\left( e^{-z} ; e^{-z}\right)_\infty^k} - \left( \frac{z}{2\pi}\right)^{\frac{k}2} e^{-\frac{kz}{24}} P_k \left(e^{- \frac{4\pi^2}{z}} \right) =
O(1).
\]
To bound $\vartheta_m$, we use that on $\mathcal{C}_1$
$$
|z|^2 = \eta^2 +\im (z)^2 \leq \eta^2 +4\eta^2.
$$
Thus, by Lemma \ref{ThetaLemma},
$$
\left| \vartheta_m \left(q\right) \right| \ll |z|\ll\eta,
$$
where  throughout $g(x)\ll f(x)$ has the same meaning as $g(x)=O(f(x))$.
Using that the length of $\mathcal{C}_1$ is $O(\eta)$, we may thus bound
$$
E_1
 \ll n^{-1}   e^{\pi \frac{\sqrt{kn}}{\sqrt{6}}}.
$$

We next investigate $M_1$.
We aim to approximate $\vartheta_m$ by its Taylor expansion given in Lemma \ref{ThetaLemma} and thus we split
$$
M_1=M_2+E_2
$$
with 
$$
M_2:= \frac1{2\pi i}\sum_{\ell=1}^N d_{m,k}(\ell) \int_{\mathcal{C}_1} \frac1{q^{n+1}} \left( \frac{z}{2\pi}\right)^{\frac{k}2} P_k \left(e^{- \frac{4\pi^2}{z}} \right) z^\ell dq,
$$
$$
E_2:= \frac1{2\pi i} \int_{\mathcal{C}_1} \frac1{q^{n+1}} \left(\frac{z}{2\pi}\right)^{\frac{k}2} P_k \left(e^{- \frac{4\pi^2}{z}} \right) \left(e^{-\frac{kz}{24}}\vartheta_m(q)- \sum_{\ell=1}^N d_{m,k}(\ell) z^\ell \right)dq.
$$
We first estimate $E_2$ and show that it contributes to the error term.  By Lemma \ref{ThetaLemma}
$$
E_2  \ll \int_{\mathcal{C}_1} e^{n\eta} \left|z\right|^{\frac{k}2 +N+1} e^{\frac{\pi^2 k}6 \re \left( \frac1{z}\right)}dz.
$$
Since
$$
\frac{\re(z)}{|z|^2} \leq \frac{1}{\re(z)} = \frac1{\eta}
$$
we may  bound
$$
n\eta +\frac{\pi^2 k}6 \re \left( \frac{1}{z}\right) \leq \pi \sqrt{\frac{2kn}{3}}
$$
Moreover on $\mathcal{C}_1$
$$
|z|^2 = x^2+y^2\leq \eta^2+4\eta^2 \ll \eta^2.
$$
As before, the path of integration may be estimated against $\eta$. Thus
$$
\left|E_2 \right| \ll \eta^{N+\frac{k}2 +2}e^{\pi\sqrt{\frac{2kn}3}}\ll n^{-\frac{N}{2}-\frac{k}4 -1} e^{\pi\sqrt{\frac{2kn}3}}.
$$
We next decompose 
\be
\label{eq:I12}
M_2=(2\pi)^{-\frac{k}2} \sum_{\ell=1}^Nd_{m,k}(\ell) \sum_{24j-k\leq 0} p_k (j) \mathcal{I}_{\ell + \frac{k}2, j}^{(k)},
\ee
where for $s>0$
\[
\mathcal{I}_{s,j}^{(k)} :=\frac{1}{2\pi i}\int_{\mathcal{C}_1}z^s e^{\frac{\pi^2 k}{6z} - \frac{4\pi^2 j}{z} +(n+1)z}dq.
\] 
These integrals may now be written in terms of the classical $I$-Bessel functions.
\begin{lemma}\label{lemma:Isexp} We have
$$
\mathcal{I}_{s,j}^{(k)} = n^{\frac{-s-1}{2}}\left( \frac{\pi\sqrt{k-24j}}{\sqrt{6}} \right)^{s+1} I_{-s-1}\left(\pi \sqrt{\frac23 (k-24j)\,n} \right)+O\left(n^{-1-\frac{s}{2}}e^{\frac{\pi\sqrt{3kn}}{2\sqrt{2}}}\right).
$$ 
\end{lemma}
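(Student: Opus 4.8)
The plan is to pass from the $q$-integral to a contour integral in $z$ and then recognize the result as a Hankel-type representation of the $I$-Bessel function. First I would use $q = e^{-z}$, so that $dq = -e^{-z}\,dz$ and the factor $e^{(n+1)z}\,dq$ collapses to $-e^{nz}\,dz$. Writing $a := \tfrac{\pi^2(k-24j)}{6}$ (note $a \ge 0$ precisely because $24j-k\le 0$) and accounting for the orientation of $\mathcal{C}_1$, this converts $\mathcal{I}^{(k)}_{s,j}$ into $\frac{1}{2\pi i}\int_{\eta-2i\eta}^{\eta+2i\eta} z^s e^{nz+a/z}\,dz$, a vertical segment with $\re(z)=\eta$. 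The choice $\eta=\pi\sqrt{k/(6n)}$ is exactly what places this segment on (for $j=0$, through) the saddle point $z=\sqrt{a/n}$ of the phase $nz+a/z$, which is what makes the estimate succeed.

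Next I would complete the segment to a Hankel contour $\mathcal{H}$ by attaching the two horizontal rays $\im(z)=\pm 2\eta$ with $\re(z)$ running from $\eta$ out to $-\infty$, so that $\mathcal{H}$ wraps (counterclockwise) around the branch cut of $z^s$ along the negative real axis. Then $\int_{\mathcal{C}_1}=\int_{\mathcal{H}}-\int_{\text{rays}}$. On $\mathcal{H}$ the scaling $t=nz$ reduces the integral to the classical representation $\frac{1}{2\pi i}\int_{\mathcal{H}} t^{-\nu-1}e^{t+w/t}\,dt = w^{-\nu/2} I_\nu(2\sqrt w)$ with $\nu=-s-1$ and $w=an$; since $z^s=n^{-s}t^s$, this gives $n^{-(s+1)/2}a^{(s+1)/2} I_{-s-1}(2\sqrt{an})$. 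Unwinding $a^{(s+1)/2}=\bigl(\pi\sqrt{k-24j}/\sqrt 6\bigr)^{s+1}$ and $2\sqrt{an}=\pi\sqrt{\tfrac23(k-24j)n}$ then reproduces exactly the main term of the lemma.

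It remains to bound the two ray integrals, which become the error term. I would stress that the convergence is genuine only on $\mathcal{H}$: along the rays $\re(z)\to-\infty$, so $e^{nz}$ decays, whereas a naive full vertical line diverges because $s=\ell+\tfrac{k}2>0$. On a ray $z=x\pm 2i\eta$ the integrand has modulus $|z|^s e^{nx}e^{a x/(x^2+4\eta^2)}$, which on $x\le\eta$ is increasing and hence maximal at the right endpoint $x=\eta$, where the exponent equals $n\eta+a/(5\eta)$; moreover it decays at rate $\asymp n$ as $x$ decreases. A Laplace/endpoint estimate therefore concentrates the mass in a window of length $\asymp 1/n$ near $z=\eta\pm 2i\eta$, producing $\tfrac1n\cdot\eta^{s}\asymp n^{-1-s/2}$ together with $e^{n\eta+a/(5\eta)}$, whose exponent is uniformly (in the finitely many admissible $j$) at most $\tfrac{\sqrt6}{5}\pi\sqrt{kn}$, strictly below both the leading Bessel exponent and the stated bound $\tfrac{\pi\sqrt{3kn}}{2\sqrt2}$.

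The main obstacle I expect is this last estimate: one must avoid the crude ``$\sup\times\text{length}$'' bound, which would lose a factor $\sqrt n$ and give only $n^{-(s+1)/2}$, and instead exploit the exponential decay along the rays to obtain the sharp power $n^{-1-s/2}$, while checking that $n\eta+a/(5\eta)$ stays below $\tfrac{\pi\sqrt{3kn}}{2\sqrt2}$. Secondary care is needed in the contour bookkeeping: fixing the branch of $z^s$, verifying that $\mathcal{H}$ encircles the cut with the correct orientation so that the Hankel formula applies in the relevant regime $\nu=-s-1<-1$ (where the vertical-line integral itself is not absolutely convergent), and justifying the substitution $t=nz$ at the level of the Hankel contour.
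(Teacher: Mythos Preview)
Your proposal is correct and follows essentially the same route as the paper: convert $\mathcal C_1$ to the vertical segment $\mathcal D_2$ via $q=e^{-z}$, attach the two horizontal rays $\mathcal D_1,\mathcal D_3$ to $-\infty$ to form a Hankel contour, rescale $t=nz$, and read off the $I$-Bessel function, with the ray integrals furnishing the error. The only notable difference is in the ray estimate: the paper bounds $\bigl|\re(1/z)\bigr|\le 1/(2\eta)$ uniformly on $\mathcal D_3$ and then splits the resulting integral at $u=\eta$ into a piece of size $\eta^s/n$ and an incomplete Gamma tail, which produces exactly the constant $\tfrac{\pi\sqrt{3kn}}{2\sqrt2}=\tfrac{\pi^2k}{12\eta}+n\eta$ appearing in the statement, whereas you use the sharper endpoint value $n\eta+a/(5\eta)$ together with a Laplace argument; your exponent $\tfrac{\sqrt6}{5}\pi\sqrt{kn}$ is therefore slightly better and still lies inside the stated $O$-term, so nothing is lost. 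One small caution: your assertion that the integrand is ``increasing on $x\le\eta$'' is not literally true because of the non-monotone factor $|z|^s$ for $x<0$, but the conclusion (maximum at the right endpoint, mass $\asymp\eta^s/n$) is correct once one separates the polynomial tail, which is exactly what the paper's split at $u=\eta$ accomplishes cleanly.
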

\begin{proof}
Let $\mathcal{D}$ be the rectangular counterclockwise
path from $-\infty-2i\eta$ to $-\infty+2i\eta$  with endpoints 
$\eta-2i\eta$ and $\eta+2i\eta$.  
Denote by $\mathcal{D}_{i}$, $i=1,2,3$,
the paths 1) from  $-\infty-2i\eta$ to $\eta-2i\eta$, 2) from $\eta-2i\eta$ to
$\eta+2i\eta$, and 3) from $\eta+2i\eta$ to $-\infty+2i\eta$. Making the change of variables $q=e^{-z}$ gives that
$$
\mathcal{I}_{s,j}^{(k)}=\frac{1}{2\pi i}\int_{\mathcal{D}_2} z^s e^{\frac{\pi^2 k}{6z} - \frac{4\pi^2 j}{z}+nz} dz.
$$
We next use the Residue Theorem to turn this integral into an integral over $\mathcal{D}$. For this we bound the integrals along $\mathcal{D}_1$ and $\mathcal{D}_3$. We
only give the details for $\mathcal{D}_3$. On this path we may bound
$$
\left| \re \left( \frac1{z} \right) \right|  \leq \frac1{|z|} \leq \frac1{2\eta}.
$$
Writing $z=\eta (1+2i) -u$, $0\leq u<\infty$ gives
$$
|z| = \sqrt{\left( \eta -u \right)^2 +4\eta^2} \ll \eta +u.
$$
Thus the integral along $\mathcal{D}_3$ may be bounded by
\begin{equation}\label{boundD3}
\ll e^{\frac{2 \pi^2}{\eta}\left( \frac{k}{24} - j \right)}
\int_0^\infty \left( \eta +u\right)^s e^{n (\eta -u)} du \ll
e^{\frac{\pi^2 k}{12\eta} +n\eta} \left( \eta^s \int_0^\eta e^{-nu}du +
  \int_\eta^\infty u^s e^{-nu} du\right). 
\end{equation}
The second term is an incomplete Gamma function and thus exponentially small. Thus (\ref{boundD3}) may up to an exponentially small error be bounded by
$$
\eta^s e^{\frac{\pi^2 k}{12\eta}+n\eta}\frac1n \left(1-e^{-n\eta}\right)\ll n^{-1-\frac{s}{2}}e^{\frac{\pi\sqrt{3kn}}{2\sqrt{2}}}.
$$
In the remaining integral we make the change of variables $z=\frac{t}{n}$ to get
\be
\label{eq:Isj}
\mathcal{I}_{s,j}^{(k)}=n^{-s-1} \frac1{2\pi i} \int_\mathcal{D} t^s e^{t+\frac{\pi^2 kn}{6t}-\frac{4\pi^2 jn}{t}} dt+O\left(n^{-1-\frac{s}{2}}e^{\frac{\pi \sqrt{3kn}}{2\sqrt{2}}}\right).
\ee
We now use the following representation of the $I$-Bessel function
\cite{Abramovitz:1972} 
\[
I_\ell (2\sqrt{z}) =  z^{\frac{\ell}2} \frac1{2\pi i} \int_{-\infty}^{(0+)} t^{-\ell-1} \text{exp} \left( t+\frac{z}{t} \right) dt,
\]
where the integral is along any path looping from $-\infty$ around $0$
back to $-\infty$ counterclockwise. Substitution into (\ref{eq:Isj}) gives the claim.
\end{proof} 

Substitution of Lemma \ref{lemma:Isexp} in equation (\ref{eq:I12}) yields
\begin{align*}
M_2&=(2\pi)^{-\frac{k}2}\sum_{\substack{1\leq \ell \leq N \\ 24j-k \leq 0}} d_{m,k}(\ell)\,p_k(j)\, n^{-\frac{2+2\ell+k}{4}}\left(\frac{\pi\sqrt{k-24j}}{\sqrt{6}}\right)^{1+\ell +\frac{k}{2}}
\\
&\qquad \qquad \times I_{-1-\ell-\frac{k}2}\left(\pi\sqrt{\frac23 (k-24j)\,n} \right)+O\left(n^{-\frac{3}{2}-\frac{k}{4}} e^{\frac{\pi \sqrt{3kn}}{2\sqrt{2}}}\right).
\end{align*}

\subsection{The integral along $\mathcal{C}_2$}
On $\mathcal{C}_2$, $\im(z)$ varies from $-2\eta$ to $-2\pi + 2\eta$. 
Using a rough bound for the theta function, we find
\[
\left\lvert \vartheta_m(q)\right\rvert \ll 2\sum_{n\geq 1} e^{-\frac{n}{2}(n+1+2m)\text{Re}(z)}  +1
\ll 2\sum_{n\geq 0} e^{-n\text{Re}(z)}=\frac{2}{1-e^{-\eta}}\ll\frac1\eta.
\]
Using (\ref{eta-trans}) we obtain the bound
\begin{equation}\label{etabound}
\frac{1}{\left( e^{-z} ; e^{-z}\right)_\infty} \ll e^{\frac{\pi^2}{6}\text{Re}\left(\frac1z\right)}.
\end{equation}
Now
\[
\text{Re}\left(\frac1z\right)=\frac{\eta}{\eta^2+\im(z)^2}\leq \frac{1}{5\eta}.
\]
Thus (\ref{etabound}) may be estimated against $\exp({\frac{\pi^2}{30\eta}})$.
This gives that:
\[
E \ll e^{\frac{\pi^2 k}{30\eta}+n\eta}\ll e^{\frac{\pi}5 \sqrt{6nk}}.
\]
This is exponentially smaller than the other errors. Combining this with the results of Subsection \ref{sec:intC1} therefore gives Theorem \ref{mainasymp}.

\providecommand{\href}[2]{#2}\begingroup\raggedright

\end{document}